\documentclass[11pt, reqno, english]{amsart}  
\usepackage[utf8]{inputenc}
\usepackage[T1]{fontenc}
\usepackage{amsmath,amsthm}
\usepackage{amsfonts,amssymb}
\usepackage{url}
\usepackage{mathtools}  
\usepackage[colorlinks=true,urlcolor=blue,linkcolor=red,citecolor=magenta]{hyperref}
\usepackage{enumerate, paralist}
\usepackage{tikz-cd}
\usepackage[left=1.3in,right=1.3in,top=1in,bottom=1in]{geometry}

\usepackage{xstring,ifthen,tabularx}
\usepackage{tikz}
\newcommand*\rc[1]{\tikz[baseline=(char.base)]{ \StrLen{#1}[\mystr]\ifthenelse{ \mystr > 1}{\hspace{-.07cm}\node[shape=circle,color=red,text=black,draw,inner sep=0pt](char){#1};}{\hspace{-.06cm}\node[shape=circle,color=red,text=black,draw,inner sep=1pt](char){#1};}}}

\newcounter{commentcounter}

\theoremstyle{plain}
\newtheorem{theorem}{Theorem}
\newtheorem{lemma}[theorem]{Lemma}
\newtheorem{corollary}[theorem]{Corollary}

\theoremstyle{definition}
\newtheorem{definition}[theorem]{Definition}

\newcommand{\R}{\mathbb{R}}
\newcommand{\C}{\mathbb{C}}
\newcommand{\Z}{\mathbb{Z}}

\linespread{1.2}

\begin{document}

\title[Trapezoids and regular maps]{On inscribed trapezoids and affinely $3$-regular maps}



\author{Florian Frick}
\address[FF]{Dept.\ Math.\ Sciences, Carnegie Mellon University, Pittsburgh, PA 15213, USA \newline \indent Inst. Math., Freie Universit\"at Berlin, Arnimallee 2, 14195 Berlin, Germany}
\email{frick@cmu.edu} 

\author{Michael Harrison}
\address[MH]{Institute for Advanced Study, 1 Einstein Drive, Princeton, NJ  08540}
\email{mah5044@gmail.com} 

\thanks{FF was supported by NSF grant DMS-1855591, NSF CAREER grant DMS 2042428, and a Sloan Research Fellowship.  MH was supported by Mathematisches Forschungsinstitut Oberwolfach with an Oberwolfach Leibniz Fellowship and by the Institute for Advanced Study through the NSF Grant DMS-1926686.}


\begin{abstract}
\small
We show that any embedding $\R^d \to \R^{2d+2^{\gamma(d)}-1}$ inscribes a trapezoid or maps three points to a line, where $2^{\gamma(d)}$ is the smallest power of $2$ satisfying $2^{\gamma(d)} \geq \rho(d)$, and $\rho(d)$ denotes the Hurwitz--Radon function.  The proof is elementary and includes a novel application of nonsingular bilinear maps.  As an application, we recover recent results on the nonexistence of affinely $3$-regular maps, for infinitely many dimensions~$d$, without resorting to sophisticated algebraic techniques.  
\end{abstract}

\date{\today}
\maketitle

\section{Introduction}

The main results of this paper are as follows.

\begin{theorem}
\label{thm:main}
	Let $d\ge 1$ be an integer and let $n \le 2d + \rho(d)-1$. Then any embedding $f\colon \R^d \to \R^n$ inscribes a trapezoid or maps three distinct points to a line. 
\end{theorem}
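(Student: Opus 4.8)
The plan is to argue the contrapositive: assume that $f$ neither inscribes a trapezoid nor maps three distinct points to a line, and deduce $n \ge 2d+\rho(d)$. The first step is to repackage both hypotheses as a single injectivity statement about secant directions. For distinct $x,y\in\R^d$ write $\delta(x,y)=[f(x)-f(y)]\in\mathbb{RP}^{n-1}$ for the direction of the corresponding secant. Three distinct points map to a line exactly when two secants sharing an endpoint have equal direction, and an inscribed trapezoid is exactly a pair of secants spanned by four distinct points with equal direction (these four points cannot be collinear, for otherwise three of them would already lie on a line). Hence the two hypotheses together assert precisely that the induced chord-direction map $\delta\colon\binom{\R^d}{2}\to\mathbb{RP}^{n-1}$ on unordered pairs of distinct points is injective. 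Letting $y\to x$ along a fixed direction, injectivity forces $f$ to be an immersion whose projectivized differential is fiberwise injective, so $\delta$ extends continuously over the diagonal to the projectivized tangent bundle; I will want this extension to control degenerations later.

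The second step converts injectivity of $\delta$ into the dimension bound using a nonsingular bilinear map $\beta\colon\R^{\rho(d)}\times\R^d\to\R^d$ of Hurwitz--Radon type, which exists by the very definition of $\rho(d)$ and satisfies $\beta(u,v)=0$ only when $u=0$ or $v=0$. For $(m,v,u)\in\R^d\oplus\R^d\oplus\R^{\rho(d)}$ I form the two points $m\pm\beta(u,v)$ together with the secant vector $f(m+\beta(u,v))-f(m-\beta(u,v))$. Nonsingularity of $\beta$ guarantees that these points are distinct as soon as $u$ and $v$ are both nonzero, so the secant collapses only on a small degenerate locus, and replacing $u$ by $-u$ interchanges the two points and therefore negates the secant vector. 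The aim is to package this into an antipodal map $S^{2d+\rho(d)-1}\to S^{n-1}$ from the unit sphere of $\R^d\oplus\R^d\oplus\R^{\rho(d)}$: injectivity of $\delta$ is exactly what forces such a map to be well defined and zero-free, and Borsuk--Ulam then yields $2d+\rho(d)-1\le n-1$, that is $n\ge 2d+\rho(d)$, contradicting $n\le 2d+\rho(d)-1$. The point of $\beta$ is to supply, by hand, the $\rho(d)$ additional antipodal directions that in the classical treatment of regular maps are hidden in the Stiefel--Whitney classes of a bundle over projective space; this is what makes the argument elementary.

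The principal obstacle is the construction of the antipodal, zero-free map in the second step. The involution $u\mapsto -u$ that negates the secant has fixed points where $u=0$, and these are precisely the points at which the chord $m\pm\beta(u,v)$ degenerates, so the naive formula both produces zeros and fails to be a free involution. Resolving this is the technical heart of the proof: one must either excise the degenerate locus and glue in the extended chord-direction map on the projectivized tangent bundle from the first step (using the immersion/tangent-direction limit to fill in the missing secants), or reorganize the domain---for instance as a join, or by carrying a second independent chord and taking the free involution to be the swap of the two chords---so that the resulting involution is genuinely free and the test map nowhere vanishing. Nonsingularity of $\beta$ is exactly the input that controls these degenerations and keeps the sphere count sharp at $2d+\rho(d)-1$. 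Once continuity, oddness, and nonvanishing of the extended map are verified, Borsuk--Ulam applies and the hypothesis $n\le 2d+\rho(d)-1$ is contradicted, completing the contrapositive and hence the theorem.
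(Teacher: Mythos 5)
Your first step is a correct and useful repackaging: the two conclusions together are equivalent to injectivity of the secant-direction map, which is exactly the role played by the paper's Lemma~\ref{lem:zerotrap}. But the second step, which you yourself flag as ``the technical heart,'' is where the proof actually lives, and the plan you commit to for it does not work. With a single chord $m\pm\beta(u,v)$ the only symmetry available is $u\mapsto -u$, and the relevant domain $\{(m,v,u): u\neq 0,\ v\neq 0\}$ admits a $\Z/2$-equivariant map onto $S^{\rho(d)-1}$ (project to $u/|u|$); its $\Z/2$-index is therefore at most $\rho(d)-1$, so any odd nonvanishing test map built on it can certify only $n\ge\rho(d)$, far short of $n\ge 2d+\rho(d)$. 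There is no antipodal sphere of dimension $2d+\rho(d)-1$ hiding in this configuration, so classical Borsuk--Ulam cannot deliver the stated bound. The $2d$ can only enter through a second, independent chord and the swap involution --- your parenthetical ``carrying a second independent chord'' is in fact the paper's construction, not an optional variant --- and at that point the natural domain is an invariant copy of $S^{2d}\times S^{\rho(d)-1}$ carrying a $(\Z/2)^2$-action, not a single sphere with a free antipodal involution.

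Once in that setting, plain Borsuk--Ulam is the wrong tool and the naive product statement is false in general: a $(\Z/2)^2$-map $S^m\times S^q\to\R^{m+q}$ need not vanish. The paper instead invokes Hopf's theorem (Lemma~\ref{lem:zeros}), which guarantees a zero precisely when $m$ and $q$ share no common one in their binary expansions, and then verifies by hand that $m=2d$ and $q=\rho(d)-1$ satisfy this: writing $d$ as an odd number times $2^{\ell}$ with $\ell=4a+b$, one has $\rho(d)=2^b+8a<2^{\ell+1}$, so $\rho(d)-1$ occupies only the low-order binary digits that $2d$ vacates. Your proposal never confronts this arithmetic, and without it the topological step has no content --- the digit condition is exactly where the special structure of the Hurwitz--Radon function is used. (A smaller issue: your fallback of extending $\delta$ over the diagonal via the ``projectivized differential'' presumes $f$ is differentiable, which the theorem does not assume.) So the overall architecture is recognizably right, but the step that actually produces the bound --- the two-chord $(\Z/2)^2$-domain, the replacement of Borsuk--Ulam by Hopf's product theorem, and the binary-digit computation tying $\rho(d)$ to $2d$ --- is missing.
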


Here $\rho(d)$ denotes the Hurwitz--Radon function and is defined as follows: decompose $d$ as the product of an odd number and $2^{4a+b}$ for $0 \leq b \leq 3$, then $\rho(d) \coloneqq 2^b + 8a$.   In particular, Theorem~\ref{thm:main} implies: 

\begin{corollary}
\label{cor:main}
	Any embedding $\R^2 \to \R^5$, $\R^4 \to \R^{11}$, or $\R^8 \to \R^{23}$ inscribes a trapezoid or maps three distinct points to a line.
\end{corollary}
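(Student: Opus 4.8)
The plan is to deduce the corollary directly from Theorem~\ref{thm:main}, so the only task is to evaluate the Hurwitz--Radon function $\rho(d)$ at the three relevant dimensions $d \in \{2,4,8\}$ and compute the resulting bound $2d + \rho(d) - 1$. Since each of these dimensions is itself a power of $2$, namely $2 = 2^1$, $4 = 2^2$, and $8 = 2^3$, the decomposition of $d$ as an odd number times $2^{4a+b}$ has odd factor equal to $1$ in every case, which makes the evaluation of $\rho$ a one-step arithmetic check.

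Concretely, I would carry out the three computations as follows. For $d = 2$ we write $2 = 1 \cdot 2^{1}$, so $4a + b = 1$ forces $a = 0$ and $b = 1$, whence $\rho(2) = 2^1 + 8\cdot 0 = 2$ and $2d + \rho(d) - 1 = 4 + 2 - 1 = 5$. For $d = 4$ we write $4 = 1 \cdot 2^{2}$, so $a = 0$ and $b = 2$, giving $\rho(4) = 2^2 = 4$ and $2d + \rho(d) - 1 = 8 + 4 - 1 = 11$. For $d = 8$ we write $8 = 1 \cdot 2^{3}$, so $a = 0$ and $b = 3$, giving $\rho(8) = 2^3 = 8$ and $2d + \rho(d) - 1 = 16 + 8 - 1 = 23$. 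Applying Theorem~\ref{thm:main} with $n$ equal to $5$, $11$, and $23$ respectively then yields the three assertions.

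I do not expect any genuine obstacle here, since all of the mathematical content is already packaged in Theorem~\ref{thm:main}; the corollary is purely an instantiation. The only point requiring minor care is ensuring that the chosen target dimensions do not exceed the bound $2d + \rho(d) - 1$, and indeed each is exactly equal to it, so the hypotheses of the theorem are met with equality and the conclusion applies verbatim.
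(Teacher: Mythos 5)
Your computation is correct and is exactly the (implicit) argument in the paper: $\rho(2)=2$, $\rho(4)=4$, $\rho(8)=8$ yield the bounds $5$, $11$, $23$, and the corollary is a direct instantiation of Theorem~\ref{thm:main}. Nothing further is needed.
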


With a small technical modification we obtain the following improved bound.

\begin{theorem}
\label{thm:main2}
	Let $d\ge 1$ be an integer and let $n \le 2d + 2^{\gamma(d)}-1$, where $2^{\gamma(d)}$ is the smallest power of $2$ satisfying $2^{\gamma(d)} \geq \rho(d)$.  Then any embedding $f\colon \R^d \to \R^n$ inscribes a trapezoid or maps three distinct points to a line. 
\end{theorem}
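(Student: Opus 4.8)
The plan is to argue the contrapositive of Theorem~\ref{thm:main2}: assuming $f\colon\R^d\to\R^n$ is an embedding that inscribes no trapezoid and sends no three distinct points to a line, I will show $n\ge 2d+2^{\gamma(d)}$. The engine is a \emph{secant direction map}. Let $P$ denote the space of unordered pairs $\{x,y\}$ of distinct points of $\R^d$; since the swap action on $(\R^d\times\R^d)\setminus\Delta$ is free, $P$ is a manifold of dimension $2d$. Define $\sigma\colon P\to\R P^{n-1}$ by $\sigma(\{x,y\})=[f(x)-f(y)]$, which is well defined and continuous because $f$ is injective. The first step is the observation that $\sigma$ is \emph{injective}: if $\sigma(\{x,y\})=\sigma(\{x',y'\})$ for distinct pairs, then $f(x)-f(y)$ and $f(x')-f(y')$ are parallel nonzero vectors; when all four points are distinct this is an inscribed trapezoid, and when exactly one point is shared it forces three distinct points onto a line. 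Both are excluded, so $\sigma$ is injective, and invariance of domain forbids a continuous injection of a $2d$-manifold into the $(n-1)$-manifold $\R P^{n-1}$ unless $n-1\ge 2d$. This is the baseline bound.

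The remaining gain of $2^{\gamma(d)}-1$ is where nonsingular bilinear maps enter, and this is exactly the step I would import from the proof of Theorem~\ref{thm:main} and then modify. The point is that $\sigma$ is not an arbitrary injection: near the diagonal of $P$ its behaviour is controlled by the differentials of $f$, so $\sigma$ pulls the tautological line bundle on $\R P^{n-1}$ back to a nontrivial bundle over $P\simeq\R P^{d-1}$. Feeding a nonsingular bilinear map $\beta\colon\R^k\times\R^d\to\R^m$ into this structure enlarges the source of an injective, bundle-compatible map to dimension $2d+k-1$, provided the target dimension $m$ of $\beta$ stays within $\R^n$; invariance of domain then upgrades the baseline to $n-1\ge 2d+k-1$. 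For Theorem~\ref{thm:main} one takes the Hurwitz--Radon multiplication, i.e.\ a nonsingular bilinear map $\R^{\rho(d)}\times\R^d\to\R^d$ with $m=d$, giving $n\ge 2d+\rho(d)$. Crucially, because we use only \emph{bilinear} nonsingularity, the existence of $\beta$ is governed by elementary Clifford-module algebra rather than by the Adams vector-fields theorem.

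The ``small technical modification'' yielding Theorem~\ref{thm:main2} is to replace the Hurwitz--Radon map by a nonsingular bilinear map on the next power of two. Taking $k=2^{\gamma(d)}$, an elementary construction --- for instance polynomial multiplication, which furnishes a nonsingular bilinear map $\R^{k}\times\R^d\to\R^{k+d-1}$, or the Clifford multiplication on a $2^{\gamma(d)}$-dimensional module --- provides a $\beta$ whose target dimension $m$ is still at most the asserted value of $n$. Running the same injectivity-plus-invariance-of-domain argument with this $\beta$ produces an injective map out of a source of dimension $2d+2^{\gamma(d)}-1$, hence $n\ge 2d+2^{\gamma(d)}$, contradicting $n\le 2d+2^{\gamma(d)}-1$. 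I expect the main obstacle to be precisely the bookkeeping in this last step: one must check that enriching $P$ by $\beta$ does not secretly create a pair of parallel secants --- which would reintroduce a trapezoid or three collinear points and destroy injectivity --- and that the target dimension $m$ of the power-of-two map really fits inside $\R^n$, so that the gain is the full $2^{\gamma(d)}-1$ rather than $\rho(d)-1$. Verifying that the power-of-two bilinear map can be chosen with a target small enough is the quantitative heart of the improvement, and it is the step I would scrutinize most carefully.
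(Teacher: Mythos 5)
Your baseline step is sound: the secant-direction map $\sigma\colon P\to\R P^{n-1}$ is injective under the hypotheses, and invariance of domain gives $n\ge 2d+1$. But everything beyond that is asserted rather than proved, and the asserted mechanism does not work. You claim that a nonsingular bilinear map ``enlarges the source of an injective, bundle-compatible map to dimension $2d+k-1$'' and that invariance of domain then gives $n-1\ge 2d+k-1$. No such injective map from a $(2d+k-1)$-dimensional source is constructed, and there is no reason for one to exist: perturbing configurations by $B(z,\cdot)$ produces an $S^{k-1}$-\emph{family} of maps $\Psi_z$ of a $(2d+1)$-dimensional space into $\R^n$, not a single injection of the product. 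The paper's actual argument obstructs this $\Z/2$-equivariant family by showing that an associated $(\Z/2)^2$-equivariant test map $F_2(X)\times S^{\rho-1}\to\R^n$ must vanish, using Hopf's theorem that a $(\Z/2)^2$-map $S^m\times S^q\to\R^{m+q}$ has a zero whenever $m$ and $q$ share no common one in their binary expansions. That digit condition (verified for $m=2d$, $q=\rho(d)-1$ from the explicit form of $\rho$) is the crux, and it has no analogue in an invariance-of-domain argument; your proposal silently replaces a genuinely equivariant-topological input with a dimension count.

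The gap is even more serious in the step specific to Theorem~\ref{thm:main2}. The construction requires the (multi)linear map to take values in $\R^d$ itself, because $B(z,x-y)$ is \emph{added to points of the domain of $f$} before applying $f$. Your proposed substitutes --- polynomial multiplication $\R^{k}\times\R^d\to\R^{k+d-1}$, or a Clifford module of dimension $2^{\gamma(d)}$ --- land in spaces larger than $\R^d$ and so cannot be fed into $f$; and a nonsingular bilinear map $\R^{2^{\gamma(d)}}\times\R^d\to\R^d$ with $2^{\gamma(d)}>\rho(d)$ does not exist, by the very definition of the Hurwitz--Radon function. The paper's ``small technical modification'' is different: it composes the Hurwitz--Radon map with itself to obtain a nonsingular \emph{trilinear} map $C\colon\R^{2^{\gamma(d)}-\rho+1}\times\R^{\rho}\times\R^d\to\R^d$, $C(w,z,x)=B(w,B(z,x))$, which still lands in $\R^d$, and then applies a $(\Z/2)^3$ version of Hopf's theorem to $F_2(X)\times S^{2^{\gamma(d)}-\rho}\times S^{\rho-1}$, checking that the three dimensions $2d$, $\rho-1$, and $2^{\gamma(d)}-\rho$ pairwise share no binary ones. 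Without this composition trick and the Stiefel--Hopf digit criterion, the gain of $2^{\gamma(d)}-1$ over the trivial bound is not established.
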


\begin{corollary}
\label{cor:main2}
	Any embedding $\R^{16}\to\R^{47}$ inscribes a trapezoid or maps three distinct points to a line.
\end{corollary}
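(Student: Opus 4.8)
The plan is to observe that Corollary~\ref{cor:main2} is simply the special case $d=16$ of Theorem~\ref{thm:main2}, so the only work required is to evaluate the admissible dimension bound $n \le 2d + 2^{\gamma(d)}-1$ at this value of $d$ and check that $47$ falls under it.

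First I would compute the Hurwitz--Radon value $\rho(16)$. Writing $16 = 1\cdot 2^{4}$ with odd part $1$, I must solve $4a+b=4$ subject to $0\le b\le 3$; the unique solution is $a=1$, $b=0$, so that $\rho(16)=2^{0}+8\cdot 1 = 9$. Next I would determine $2^{\gamma(16)}$ as the smallest power of $2$ that is at least $\rho(16)=9$. Since $2^{3}=8<9\le 16=2^{4}$, this gives $2^{\gamma(16)}=16$. Substituting into the bound of Theorem~\ref{thm:main2} yields $2d+2^{\gamma(d)}-1 = 32 + 16 - 1 = 47$, so every $n\le 47$ is admissible; taking $n=47$ produces exactly the statement that every embedding $\R^{16}\to\R^{47}$ inscribes a trapezoid or maps three distinct points to a line.

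There is no genuine obstacle here, since all of the topological content is already supplied by Theorem~\ref{thm:main2}; the corollary is pure arithmetic. It is worth noting, however, that the passage from $\rho(d)$ to $2^{\gamma(d)}$ is precisely what makes $d=16$ worth recording separately: the weaker Theorem~\ref{thm:main} gives only $n \le 2\cdot 16 + \rho(16) - 1 = 32 + 9 - 1 = 40$, whereas rounding $\rho(16)=9$ up to the power $16$ raises the admissible dimension to $47$. The sole point demanding care is the Hurwitz--Radon computation itself, where one must respect the constraint $0\le b\le 3$ when factoring the power of two out of $d$; for $d=16$ a careless choice of $a=0$, $b=4$ would violate this constraint and must be discarded in favor of $a=1$, $b=0$.
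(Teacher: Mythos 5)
Your proposal is correct and matches the paper's (implicit) derivation: Corollary~\ref{cor:main2} is exactly the case $d=16$ of Theorem~\ref{thm:main2}, and your arithmetic $\rho(16)=9$, $2^{\gamma(16)}=16$, $2\cdot 16+16-1=47$ is right, including the care taken with the constraint $0\le b\le 3$ in the Hurwitz--Radon decomposition. Your side remark correctly identifies why this case is recorded separately from Corollary~\ref{cor:main}: Theorem~\ref{thm:main} alone would only reach $\R^{40}$.
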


The idea of the proofs is simple: given an embedding $f \colon \R^d \to \R^n$, we define a suitable test function which takes the value zero if and only if $f$ inscribes a trapezoid or maps three points to a line, and then we apply a Borsuk-Ulam type result to show that the test function must hit zero in the stated dimensions.   While this general idea is ubiquitous in topology,  we emphasize that our specific test function utilizes the Hurwitz--Radon function in a novel way to capture the geometry of the situation more adequately than the ``obvious'' test function; see Section~\ref{sec:testmap} for a detailed discussion.   Additional context and known results related to Theorems~\ref{thm:main} and~\ref{thm:main2} may be found in Section~\ref{sec:discussion}.

The test function makes use of the following equivalent definition of the Hurwitz--Radon function: $\rho(d)$ is the largest integer such that there exists a nonsingular bilinear map $B \colon \R^{\rho(d)} \times \R^d \to \R^d$; here nonsingularity of $B$ means that $B(x,y) = 0$ if and only if $x = 0$ or $y = 0$.   Nonsingular bilinear maps generalize the multiplication in the classical division algebras $\R, \C, \mathbb{H}, \mathbb{O}$ and were primarily studied in a series of articles by K.Y.\ Lam (e.g.\ \cite{Lam1,Lam6,Lam4,Lam3,Lam2,Lam5}) and by Berger and Friedland~\cite{BergerFriedland}.  They have appeared prominently in topology alongside the Hurwitz--Radon function; for example, a famous result of Adams states that there exist $q-1$ linearly independent tangent vector fields on the sphere $S^{d-1}$ if and only if $q \leq \rho(d)$ \cite{Adams}.  Nonsingular bilinear maps can be used to construct immersions of projective spaces (see e.g. \cite{James}) and have recently appeared in studies of skew fibrations (\cite{Harrison,Harrison3,OvsienkoTabachnikov,OvsienkoTabachnikov2}),  totally nonparallel immersions \cite{Harrison5}, and coupled embeddability \cite{FrickHarrison2}.   Nevertheless, the idea to use nonsingular bilinear maps to improve the effectiveness of a test function appears to be new. 

\section{The test function and the proofs of the main results}

Fix an integer $d \geq 1$, write $\rho = \rho(d)$,  let $B\colon \R^\rho \times \R^d \to \R^d$ be a nonsingular bilinear map, and let $X = (0,1) \times F_2(\R^d)$, where $F_2(M) \coloneqq \left\{(x,y) \in M \times M \mid x \neq y\right\}$ is the configuration space consisting of pairs of distinct points of a space $M$.   Now given an embedding $f \colon \R^d \to \R^n$, we define $\Phi\colon F_2(X) \times S^{\rho-1} \to \R^n$ by
    \begin{align*}
     \Phi((t_1,x_1,y_1),(t_2,x_2,y_2),z)
     = \ \ &  t_1[f(x_1+y_1+B(z,x_1-y_1))  - f(x_1+y_1-B(z,x_1-y_1))] \\
     - & t_2[f(x_2+y_2+B(z,x_2-y_2)) - f(x_2+y_2-B(z,x_2-y_2))],
    \end{align*}
and we show that this test function detects degeneracy in the following sense:

\begin{lemma}
\label{lem:zerotrap}
If zero is in the image of $\Phi$, then the embedding $f$ inscribes a trapezoid or maps three distinct points to a line.
\end{lemma}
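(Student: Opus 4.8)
The plan is to extract the geometry directly from the vanishing of $\Phi$. Abbreviate $a_i \coloneqq x_i + y_i + B(z, x_i - y_i)$ and $b_i \coloneqq x_i + y_i - B(z, x_i - y_i)$ for $i = 1,2$, so that $\Phi = t_1\bigl(f(a_1) - f(b_1)\bigr) - t_2\bigl(f(a_2) - f(b_2)\bigr)$. The first step is to observe that nonsingularity of $B$ puts the images into parallel position: since $z \in S^{\rho - 1}$ is nonzero and $x_i \neq y_i$, we have $B(z, x_i - y_i) \neq 0$, hence $a_i \neq b_i$ and, $f$ being injective, $v_i \coloneqq f(a_i) - f(b_i) \neq 0$. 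The hypothesis $\Phi = 0$ now reads $t_1 v_1 = t_2 v_2$, and as $t_1, t_2 \in (0,1)$ are positive this gives $v_1 = \mu v_2$ with $\mu \coloneqq t_2 / t_1 > 0$; in particular the segments $[f(a_1), f(b_1)]$ and $[f(a_2), f(b_2)]$ are parallel. Note that it is precisely the freedom in the scalars $t_1, t_2$ that lets these two parallel sides have an arbitrary positive length ratio $\mu$.

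Next I would settle the generic configuration. Suppose the four domain points $a_1, b_1, a_2, b_2$ are pairwise distinct; then so are their $f$-images. If some three of the four images happen to be collinear, then $f$ sends three distinct points to a line and we are finished. Otherwise no three are collinear, so the two parallel segments lie on distinct parallel lines and the four image points are the vertices of a trapezoid, with $[f(a_1), f(b_1)]$ and $[f(a_2), f(b_2)]$ as the pair of parallel sides.

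The remaining, and I expect most delicate, step is to handle coincidences among the domain points, where nonsingularity of $B$ is used a second time. Since $a_i \neq b_i$, every coincidence matches an index-$1$ point with an index-$2$ point, and I would first exclude the two possible ``double'' matchings. If $a_1 = a_2$ and $b_1 = b_2$, then adding and subtracting these equalities and invoking bilinearity and nonsingularity of $B$ forces $x_1 = x_2$ and $y_1 = y_2$; then $v_1 = v_2$ and $\Phi = (t_1 - t_2) v_1$, which vanishes only if $t_1 = t_2$, contradicting that the two points of $F_2(X)$ are distinct. If instead $a_1 = b_2$ and $b_1 = a_2$, the same manipulation gives $x_1 = y_2$, $y_1 = x_2$, whence $v_2 = -v_1$ and $\Phi = (t_1 + t_2) v_1 \neq 0$, again a contradiction. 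Thus at most a single coincidence can occur; in that case exactly three of the images are distinct, and substituting the coincidence into $v_1 = \mu v_2$ exhibits one of them as an affine combination of the other two, so the three are collinear and $f$ once more sends three distinct points to a line. Combining the generic case with this analysis proves the lemma; the crux is that nonsingularity of $B$ is exactly what rules out the degenerate matchings, leaving only the honest degeneracy of three collinear points.
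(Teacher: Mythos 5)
Your proposal is correct and follows essentially the same route as the paper: the same use of nonsingularity of $B$ to get $f(a_i)\neq f(b_i)$, the same exclusion of the two double matchings via bilinearity/nonsingularity and the positivity and distinctness of $t_1,t_2$, and the same collinearity conclusion in the single-coincidence case. The only cosmetic difference is that you read $t_1v_1=t_2v_2$ as parallelism of the two sides, whereas the paper rewrites it as a common affine combination (intersecting diagonals); these are equivalent characterizations of the trapezoid.
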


\begin{proof}
If $\Phi((t_1,x_1,y_1),(t_2,x_2,y_2),z) = 0$, the four image points 
\begin{align*}
u_i = f(x_i+y_i+B(z,x_i-y_i)) \ \mbox{ and } \ v_i = f(x_i+y_i-B(z,x_i-y_i)), \ i = 1,2,
\end{align*}
satisfy the equation $t_1(u_1-v_1) = t_2(u_2-v_2)$. Equivalently,
    \begin{align}
     \label{eq:trap}
     \frac{t_1}{t_1+t_2}u_1+\frac{t_2}{t_1+t_2}v_2 = \frac{t_1}{t_1+t_2}v_1+\frac{t_2}{t_1+t_2}u_2.
    \end{align}
    That is, the diagonal from $u_1$ to $v_2$ and the diagonal from $v_1$ to $u_2$ intersect, and the point of intersection splits both diagonals in a $t_1$ to $t_2$ ratio. So $u_1, u_2, v_1, v_2$ are image points of $f$ which form a (possibly degenerate) trapezoid.  We check that at least three of these four points are distinct.
    
    To verify that $u_i \neq v_i$, note that $x_i \neq y_i$ by assumption, and so $B(z,x_i-y_i) \neq 0$ by nonsingularity, and then apply injectivity of $f$.
    
    Now assume for contradiction that $u_1 = u_2$ and $v_1 = v_2$.  Since $f$ is injective, we obtain
    \begin{align*}
        x_1+y_1+B(z,x_1-y_1) &= x_2+y_2+B(z,x_2-y_2), \ \text{and} \\
        x_1+y_1-B(z,x_1-y_1) &= x_2+y_2-B(z,x_2-y_2).
    \end{align*}
    Adding and substracting these equations, respectively, yields
    \[
    x_1 + y_1 = x_2 + y_2 \quad \text{and} \quad B(z,x_1-y_1) = B(z,x_2-y_2).
    \]
    By bilinearity and nonsingularity of~$B$, the second equation gives $x_1-y_1 = x_2-y_2$.  Together
    with the first equation, this yields $x_1 = x_2$ and $y_1 = y_2$.  Since $(t_1,x_1, y_1) \neq (t_2,x_2, y_2)$ by assumption, we have $t_1 \neq t_2$, which together with the assumptions $u_1 = u_2$ and $v_1=v_2$ and Equation~(\ref{eq:trap}) implies that $u_1=v_1$, a contradiction.  Therefore, $u_1 \neq u_2$ or $v_1 \neq v_2$.
    
    Similarly, if $u_1 = v_2$ and $u_2 = v_1$, then together with the assumption $t_1(u_1-v_1) = t_2(u_2-v_2)$, we obtain $t_1(u_1-v_1)=-t_2(u_1-v_1)$, impossible since $u_1 \neq v_1$ and $t_i > 0$ by assumption.  Thus, $u_1 \neq v_2$ or $u_2 \neq v_1$.
    
    Therefore Equation~(\ref{eq:trap}) is a non-trivial affine combination that involves at least three pairwise distinct points. In the degenerate situation, where $\{u_1, u_2, v_1, v_2\}$ is a set of three points, Equation~(\ref{eq:trap}) implies that these points lie on a common line.
\end{proof}

In Section~\ref{sec:testmap} we explain the geometric motivation which led to the definition of $\Phi$.

Now to prove Theorem~\ref{thm:main}, we need only show that $\Phi$ must hit zero whenever $n \leq 2d+\rho-1$.   We make use of a classical result in equivariant topology.   Consider the $(\Z/2)^2$-action on $S^m \times S^q$ defined by letting the first copy of $\Z/2$ act antipodally on~$S^m$ and the second copy of $\Z/2$ act antipodally on~$S^q$.   Similarly we define a $(\Z/2)^2$-action on $\R^{m+q}$ by letting both generators act by $x \mapsto -x$.   A map $S^m \times S^q \to \R^{m+q}$ that commutes with these actions is a \emph{$(\Z/2)^2$-map}. 

\begin{lemma}[Hopf~\cite{hopf1940}]
\label{lem:zeros}
    Let $m\ge 1$ and $q\ge 1$ be integers that do not share a one in any digit of their binary expansions. Then any $(\Z/2)^2$-map $S^m \times S^q \to \R^{m+q}$ has a zero.
\end{lemma}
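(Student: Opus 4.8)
The plan is to argue by contradiction using $\Z/2$-equivariant cohomology (a Fadell--Husseini index computation), reducing the statement to the classical fact that $\binom{m+q}{m}$ is odd precisely when $m$ and $q$ have disjoint binary supports. Suppose some $(\Z/2)^2$-map $h\colon S^m \times S^q \to \R^{m+q}$ had no zero. Dividing by the norm yields a $(\Z/2)^2$-map $g = h/|h| \colon S^m\times S^q \to S^{m+q-1}$, where I regard the target as the unit sphere $S(W)$ of the representation $W = \R^{m+q}$ on which both generators act by $-1$. My goal is to show that no such $g$ exists.

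First I would record the relevant representations and their mod-$2$ Euler classes. Write $G = (\Z/2)^2$ and $H^*(BG;\Z/2) = \Z/2[s,t]$, where $s,t \in H^1$ are dual to the two generators. Then $S^m = S(U)$ with $U = \R^{m+1}$ carrying the first-antipodal action (Euler class $s^{m+1}$), $S^q = S(V)$ with $V = \R^{q+1}$ carrying the second-antipodal action (Euler class $t^{q+1}$), and the target representation $W$ has top mod-$2$ Euler class $(s+t)^{m+q}$, since each coordinate is the character sending both generators to $-1$. The crucial observation is that every nontrivial element of $G$ acts freely on $S^m\times S^q$, and the action is a product action, so the Borel construction is
\[
(S^m\times S^q)_G \ \simeq \ (S^m\times S^q)/G \ = \ \R P^m \times \R P^q,
\]
whose mod-$2$ cohomology is $\Z/2[s,t]/(s^{m+1},t^{q+1})$, with the classifying map $c\colon (S^m\times S^q)_G \to BG$ inducing on cohomology exactly the quotient map $\Z/2[s,t] \to \Z/2[s,t]/(s^{m+1},t^{q+1})$.

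Next I would extract the contradiction from the existence of $g$. Passing to Borel constructions, $g$ provides a nowhere-zero section of the pullback $c^*(EG\times_G W)$ of the rank-$(m+q)$ bundle associated to $W$; hence its top Stiefel--Whitney (mod-$2$ Euler) class vanishes, i.e.\ $c^*\big((s+t)^{m+q}\big) = 0$ in $H^{m+q}\big((S^m\times S^q)_G;\Z/2\big)$. By the previous paragraph this says $(s+t)^{m+q} \equiv 0$ in $\Z/2[s,t]/(s^{m+1},t^{q+1})$. Expanding, the only monomial that survives in the quotient is the $k=m$ term, so
\[
(s+t)^{m+q} \ \equiv \ \binom{m+q}{m}\, s^m t^q \pmod{(s^{m+1},t^{q+1})}.
\]
By Lucas' theorem $\binom{m+q}{m}$ is odd exactly when $m$ and $q$ share no common digit in binary, which is precisely the hypothesis; hence the right-hand side is nonzero, a contradiction. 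Therefore $g$, and with it a zero-free $h$, cannot exist.

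I expect the main obstacle to be the setup rather than the final arithmetic: one must correctly identify the $G$-index of the target sphere even though $G$ does \emph{not} act freely on $S^{m+q-1}$ (the diagonal element acts trivially). The Euler-class/nowhere-zero-section formulation sidesteps this cleanly, so the real content is verifying that $c^*$ is the quotient map $\Z/2[s,t] \to \Z/2[s,t]/(s^{m+1},t^{q+1})$ — equivalently, that the Euler classes of the source representations generate the index $(s^{m+1}, t^{q+1})$ — after which Lucas' theorem finishes the job and pinpoints exactly why the coprime-binary-digit hypothesis is the correct one.
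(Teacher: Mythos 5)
Your proof is correct and follows essentially the same route as the paper: both arguments reduce to showing that $(s+t)^{m+q} = \binom{m+q}{m}s^m t^q$ must vanish in $H^*(\R P^m \times \R P^q;\Z/2) \cong \Z/2[s,t]/(s^{m+1},t^{q+1})$ and then invoke Lucas' theorem. The only cosmetic difference is that you phrase the vanishing via the mod-$2$ Euler class of the pulled-back representation bundle, whereas the paper descends the normalized map to $\R P^m\times\R P^q \to \R P^{m+q-1}$ and uses functoriality; the underlying computation is identical.
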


Hopf's proof was one of the earliest applications of cohomology theory.  In modern language, a $(\Z/2)^2$-map which avoids zero induces a map in cohomology $H^*(\R P^{m+q-1};\Z/2) \to H^*(\R P ^m \times \R P^q;\Z/2)$ which sends the generator $\gamma_{m+q-1}$ to the sum of generators $\gamma_m + \gamma_q$.  Therefore $(\gamma_m + \gamma_q)^{m+q} = 0$, hence the binomial coefficient $\scriptsize \Big( \begin{array}{c} m+q \\ m \end{array} \Big)$ is even.  By Lucas' theorem, this occurs if and only if $m+q$ has a zero in a digit of its binary expansion in which $m$ has a one,  which occurs if and only if $m$ and $q$ share a one in some digit.

Now consider the $\Z/2$-action on $F_2(X)$ which swaps the two points of $X$.  Lemma~\ref{lem:zeros} has the following simple consequence.

\begin{corollary} \label{cor:hopf} Let $m\ge 1$ and $q\ge 1$ be integers that do not share a one in any digit of their binary expansions.  If there exists a $\Z/2$-equivariant embedding $S^m \to F_2(X)$, then (by restriction) any $(\Z/2)^2$-equivariant map $F_2(X) \times S^q \to \R^{m+q}$ has a zero.
\end{corollary}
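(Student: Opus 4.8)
The plan is to deduce this directly from Hopf's lemma (Lemma~\ref{lem:zeros}) by pulling back an arbitrary equivariant map along the given embedding. Write $\iota \colon S^m \to F_2(X)$ for the $\Z/2$-equivariant embedding, where $S^m$ carries the antipodal action and $F_2(X)$ carries the point-swapping action, and let $\Psi \colon F_2(X) \times S^q \to \R^{m+q}$ be any $(\Z/2)^2$-equivariant map. I would form the composite
\[
g \;=\; \Psi \circ (\iota \times \mathrm{id}_{S^q}) \colon S^m \times S^q \longrightarrow \R^{m+q},
\]
and argue that $g$ satisfies the hypotheses of Lemma~\ref{lem:zeros}.

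The key step is to check that $g$ is a $(\Z/2)^2$-map in the precise sense required by Lemma~\ref{lem:zeros}: the first generator acts antipodally on $S^m$, the second acts antipodally on $S^q$, and both act by negation on $\R^{m+q}$. For the first generator, equivariance of $\iota$ means that the antipodal map on $S^m$ is carried to the swap involution on $F_2(X)$; since $\Psi$ intertwines this swap involution (the first $\Z/2$-factor of its domain) with negation on $\R^{m+q}$, the composite $g$ intertwines the antipodal action on the $S^m$-factor with negation. For the second generator, $\iota \times \mathrm{id}_{S^q}$ is the identity on the $S^q$-factor, so the antipodal action on $S^q$ is carried to the antipodal action on the $S^q$-factor of the domain of $\Psi$, which $\Psi$ intertwines with negation. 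Hence $g$ commutes with both $\Z/2$-actions.

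With $g$ verified to be a $(\Z/2)^2$-map and the hypothesis that $m$ and $q$ share no one in their binary expansions, Lemma~\ref{lem:zeros} produces a point $(a,b) \in S^m \times S^q$ with $g(a,b) = 0$. Then $(\iota(a),b) \in F_2(X) \times S^q$ is a zero of $\Psi$, which is exactly the conclusion. The only real obstacle is the equivariance bookkeeping in the previous paragraph --- identifying the antipodal action on $S^m$ with the swap involution on $F_2(X)$ via $\iota$, and confirming that the resulting product action on $S^m \times S^q$ is precisely the one appearing in Hopf's lemma --- but this is a routine check once the two $\Z/2$-factors are tracked separately.
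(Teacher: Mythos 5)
Your proposal is correct and is exactly the argument the paper intends: the parenthetical ``(by restriction)'' in the statement is the paper's entire proof, namely precompose $\Psi$ with $\iota \times \mathrm{id}_{S^q}$, check $(\Z/2)^2$-equivariance of the composite, and apply Lemma~\ref{lem:zeros}. Your equivariance bookkeeping and the final step of transporting the zero back to $F_2(X) \times S^q$ are both accurate.
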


\begin{proof}[Proof of Theorem~\ref{thm:main}]
Let $n = 2d + \rho -1$ and let $f\colon \R^d \to \R^n$ be an embedding.  By Lemma~\ref{lem:zerotrap}, we only need to show that $\Phi$ has a zero.   There exists an embedding $h \colon S^{2d} \to X$, since $X$ is a $(2d+1)$-dimensional manifold, and this induces a $\Z/2$-equivariant embedding $S^{2d}~\to~F_2(X) \colon z \mapsto (h(z),h(-z))$.  Moreover, the map $\Phi$ is $(\Z/2)^2$-equivariant, so by Corollary~\ref{cor:hopf}, it suffices to check that $2d$ and $\rho-1$ do not share any common ones in their binary expansions.

To verify this, write $d$ as the product of an odd number with $2^\ell$,  where $\ell = 4a + b$ and $b \in \{0,1,2,3\}$.   Then $\rho = 2^b + 8a = 2^{\ell-4a}+ 8a < 2^{\ell+1}$. The last $\ell+1$ digits in the binary expansion of $2d$ (corresponding to $2^\ell, 2^{\ell-1}, \dots, 2^0$) are zero, and only those digits may be non-zero for~$\rho-1$.
\end{proof}

Next we present the proof of Theorem~\ref{thm:main2}, which relies on a few small modifications to the previous proof.

\begin{proof}[Proof of Theorem~\ref{thm:main2}]
Let $C \colon \R^{2^{\gamma(d)}-\rho+1} \times \R^\rho \times \R^d \to \R^d$ be the trilinear map defined by $C(w,z,x) = B(w,B(z,x))$.   This is well-defined by restriction in the first factor, since $2^{\gamma(d)}-\rho+1 \leq \rho$ by assumption.  Moreover,  nonsingularity of $B$ yields nonsingularity of $C$, where by nonsingularity of $C$ we mean that $C(w,z,x) = 0$ if and only if one of the factors is zero. 

Using $C$ in place of $B$, we define the $(\Z/2)^3$-map $\Phi \colon F_2(X) \times S^{2^{\gamma(d)-\rho}} \times S^{\rho-1} \to \R^n$ by
 \begin{align*}
     \Phi((t_1,x_1,y_1),&(t_2,x_2,y_2),w,z) \\
     = \ \ & t_1[f(x_1+y_1+C(w,z,x_1-y_1))  - f(x_1+y_1-C(w,z,x_1-y_1))] \\
     - & t_2[f(x_2+y_2+C(w,z,x_2-y_2)) - f(x_2+y_2-C(w,z,x_2-y_2))].
    \end{align*}

The proof of Lemma~\ref{lem:zerotrap} is otherwise unchanged, and so we only need to check that $\Phi$ has a zero when $n \leq 2d + 2^{\gamma(d)} - 1$.

For this, we use the obvious generalization of Hopf's lemma (with nearly-identical proof): any $(\Z/2)^3$-map $S^{m_1} \times S^{m_2} \times S^{m_3} \to \R^{m_1+m_2+m_3}$ has a zero provided that no two of the $m_i$ share a one in any digit of their binary expansions.   This applies to the integers $m_1 = 2^{\gamma(d)}-\rho$, $m_2 = \rho - 1$, and $m_3 = 2d$.  Indeed, the integers $m_1$ and $m_2$ share no ones since they sum to $2^{\gamma(d)} - 1$, and the argument that $m_1$ and $m_3$ share no ones is identical to that for $m_2$ and $m_3$, which was given in the proof of Theorem~\ref{thm:main}.
\end{proof}

\section{Context and history: $k$-regular embeddings}
\label{sec:discussion}

The main theorem and corollary are best understood in the context of \emph{regular maps}, first defined and studied by Borsuk in 1957 \cite{Borsuk}.

\begin{definition} A continuous map $f\colon \R^d \to \R^n$ is called \emph{$k$-regular} if for any $k$ pairwise disjoint points $x_1, \dots, x_k \in \R^d$ the points $f(x_1), \dots, f(x_k)$ are linearly independent.
\end{definition}

We offer simple examples for small $d$ or $k$:
\begin{compactenum}
\item $d = 1$: the moment curve $\R \to \R^k \colon t \mapsto (1,t,t^2,\dots,t^{k-1})$ is $k$-regular, due to the nonvanishing of the Vandermonde determinant on the configuration space $F_k(\R)$;
\item $k=2$: the map $\R^d \to \R^{d+1} = \R^d \times \R \colon x \mapsto (x,1)$ is $2$-regular;
\item $k=3$: if $h \colon \R^d \to S^d$ is an embedding,  then $\R^d \to \R^{d+2} \colon x \mapsto (h(x),1)$ is $3$-regular.
\end{compactenum}

Aware of these basic examples, Borsuk posed the question: given $d \geq 1$ and $k \geq 2$, what is the smallest dimension $n = n(d,k)$ such that $\R^d$ admits a $k$-regular map to $\R^n$?  
It is not difficult to check that the target dimensions are optimal for the given values of $d$ or $k$ in the above examples, so that $n(1,k) = k$,  $n(d,2) = d+1$, and $n(d,3) = d+2$.

In addition to the obvious geometric appeal, the question historically attracted interest due to connections with approximation theory and Chebyshev polynomials.  For $k \geq 4$ the question has been studied by a number of mathematicians and has proven to be notoriously difficult.

The first nontrivial result, that $n(d,2k) \geq (d+1)k$, appeared in a 1960 paper of Boltyansky, Ryzhkov, and Shashkin \cite{boltyansky1960}.  This can be shown by considering, for $f \colon \R^d \to \R^n$,  the test function
\begin{align}
\label{eqn:phi}
\varphi \colon D_1 \times \cdots \times D_k \times (\R - \left\{0\right\})^k \to \R^n \colon (x_1,\dots,x_k,\lambda_1,\dots,\lambda_k) \mapsto \sum \lambda_i f(x_i),
\end{align}
where the $D_i$ are disjoint disks in $\R^d$.  The bound follows from the observation that if $f$ is $2k$-regular, $\varphi$ embeds its $((d+1)k)$-dimensional domain into $\R^n$. 

In 1978, Cohen and Handel observed in \cite{CohenHandel} that a $k$-regular map $f \colon \R^d \to \R^n$ induces an $S_k$-equivariant map
\[
F_k(\R^d) \to V_k(\R^n) \colon (x_1,\dots,x_k) \mapsto (f(x_1),\dots,f(x_k));
\]
here $V_k(\R^n)$ is the Stiefel manifold of $k$-tuples of linearly independent vectors in $\R^n$, and the symmetric group $S_k$ acts on each space by permuting elements of the $k$-tuple.  This observation highlighted the equivariant nature of the problem, and in some sense, the subsequent results for $k$-regular maps can be viewed as a yardstick by which to measure the advances in equivariant cohomology theory over the following decades.

For example,  in the same paper, Cohen and Handel showed that $n(2,k) \geq 2k-\alpha(k)$, where $\alpha(k)$ denotes the number of ones in the dyadic presentation of $k$.  Chisholm \cite{Chisholm} generalized this by showing that $n(2^\ell,k) \geq 2^\ell(k-\alpha(k))+\alpha(k)$.   Other results on $k$-regular maps, including some for other manifolds or simplicial complexes, were contributed by Bogatyi \cite{Bogatyi}, Handel \cite{Handel,Handel2,Handel3}, and Handel and Segal \cite{HandelSegal}.  The first strong existence results were obtained in 2019 using methods of algebraic geometry \cite{buczynski2019}. 

Finally in 2021,  strong obstructions were computed by Blagojevi{\'c}, Cohen, Crabb, L{\"u}ck, and Ziegler, as a counterpart to their massive breakthrough in understanding the mod-$2$ equivariant cohomology of configuration spaces:

\begin{theorem}[\cite{blagojevic2020}, Theorem 6.16]
\label{thm:bla}
Let $d \geq 2$, $k \geq 1$, and write $d = 2^t + e$ for some $t \geq 1$ and $0 \leq e \leq 2^t-1$.  Let $\epsilon(k)$ denote the remainder of $k$ mod $2$.  Then $n(d,k) \geq (d-e-1)(k-\alpha(k))+e(\alpha(k)-\epsilon(k))+k$.
\end{theorem}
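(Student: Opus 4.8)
The plan is to convert $k$-regularity into a bundle-embedding question, reduce nonexistence to the nonvanishing of a high-degree dual Stiefel--Whitney class, and then extract that class from the mod-$2$ equivariant cohomology of the configuration space. Following Cohen--Handel, over the unordered configuration space $C_k(\R^d) = F_k(\R^d)/S_k$ I would form the rank-$k$ permutation bundle $E = F_k(\R^d) \times_{S_k} \R^k$, where $S_k$ permutes the coordinates of $\R^k$. A map $f\colon \R^d \to \R^n$ induces a bundle map $E \to \underline{\R^n}$ carrying the basis vector indexed by the $i$-th point of a configuration to $f(x_i)$, and this map is a fiberwise monomorphism precisely when $f$ is $k$-regular. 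Splitting the trivial summand, $E = \underline{\R} \oplus \xi$ with $\xi = F_k(\R^d) \times_{S_k} W_k$ associated to the standard representation $W_k = \{x \in \R^k : \sum x_i = 0\}$. A monomorphism $E \hookrightarrow \underline{\R^n}$ forces a complement of rank $n-k$, so the dual class $\bar{w}(\xi) = w(\xi)^{-1}$ vanishes above degree $n-k$. Hence $n(d,k) \geq k + J$ with $J = \max\{j : \bar{w}_j(\xi) \neq 0\}$, and the theorem becomes the assertion that $J = (2^t-1)(k-\alpha(k)) + e(\alpha(k)-\epsilon(k))$ when $d = 2^t + e$.

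Next I would pass to $2$-groups to make the class computable. Since $\xi$ is pulled back along the classifying map $C_k(\R^d) \to BS_k$ of the covering $F_k(\R^d) \to C_k(\R^d)$, the class $\bar{w}_j(\xi)$ is the restriction of a universal class, and by Quillen's detection theorem mod-$2$ characteristic classes are seen on elementary abelian $2$-subgroups. I would therefore restrict the $S_k$-action to the Sylow $2$-subgroup and, within it, to products of elementary abelian subgroups governed by the binary expansion $k = \sum_i 2^{a_i}$, using the Young-subgroup inclusion $S_{2^{a_1}} \times \cdots \subseteq S_k$ and the corresponding product splitting of the configuration space and of $\xi$. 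This is where $\alpha(k)$ (the number of binary digits) and $\epsilon(k)$ (the parity, recording whether a trivial $S_1 = S_{2^0}$ factor is present and contributes nothing) enter. In the language of the Fadell--Husseini index, nonvanishing of $\bar{w}_J$ is equivalent to showing $\mathrm{Index}_G(V_k(\R^n)) \not\subseteq \mathrm{Index}_G(F_k(\R^d))$ for the chosen $2$-group $G$, i.e.\ that a suitable product of degree-one classes survives in $H^*_G(F_k(\R^d);\Z/2)$ up to degree $J$ but is killed by the Stiefel manifold once $n < k + J$.

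The main obstacle, and the real content of the cited paper, is the explicit computation of $H^*_{S_k}(F_k(\R^d);\Z/2)$ together with the location of the top surviving dual class. I would carry this out via the Fox--Neuwirth--Fuks $S_k$-equivariant cell structure on $F_k(\R^d)$, whose cellular cochain complex yields an explicit additive basis indexed by combinatorial data and renders the ring structure and Steenrod action computable. Conceptually, the coefficient $(2^t-1)$ reflects the largest power to which the relevant classes may be raised within the cohomology available in dimension $2^t$ (the Chisholm range $e=0$), while each of the $e$ additional dimensions contributes one further increment $\alpha(k)-\epsilon(k)$, one unit from each nontrivial binary-digit factor, before the truncation imposed by $\dim C_k(\R^d)$ forces $\bar{w}_j$ to vanish. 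The delicate point is proving nonvanishing rather than merely bounding the degree: one must verify that the candidate top class is not a coboundary in the Fox--Neuwirth--Fuks complex and is not annihilated on restriction to $G$, which is exactly what the full cohomology computation delivers. Once $J$ is pinned down, substituting $d = 2^t + e$ gives the stated inequality, and the specializations $e=0$ (recovering Chisholm's $2^\ell(k-\alpha(k))+\alpha(k)$) and $t=1,\,e=0$ (recovering Cohen--Handel's $2k-\alpha(k)$) confirm the normalization.
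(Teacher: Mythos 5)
This statement is not proved in the paper at all: it is imported verbatim from the cited monograph of Blagojevi\'c, Cohen, Crabb, L\"uck, and Ziegler, and the authors explicitly remark that its proof ``relies on a lengthy technical argument which corrects a proof of a theorem of Hung.'' So there is no in-paper argument to compare yours against; the only question is whether your proposal stands on its own as a proof. It does not. Your reduction is the correct and standard one (Cohen--Handel): a $k$-regular map yields a fiberwise monomorphism from the permutation bundle $E=F_k(\R^d)\times_{S_k}\R^k$ into a trivial bundle, hence a rank-$(n-k)$ complement, hence vanishing of the dual Stiefel--Whitney classes $\bar w_j(\xi)$ for $j>n-k$, and your consistency checks (recovering Chisholm at $e=0$ and Cohen--Handel at $d=2$) confirm that the target is the identity $\max\{j:\bar w_j(\xi)\neq 0\}=(2^t-1)(k-\alpha(k))+e(\alpha(k)-\epsilon(k))$. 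But everything after that reduction is asserted rather than proved: the phrases ``which is exactly what the full cohomology computation delivers'' and the heuristic reading of the coefficients $(2^t-1)$ and $\alpha(k)-\epsilon(k)$ are placeholders for the entire mathematical content of the theorem.

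The gap is concretely located at the nonvanishing claim. Restriction to elementary abelian $2$-subgroups via Quillen detection and the Young-subgroup splitting indexed by the binary expansion of $k$ gives you, at best, an \emph{upper} bound on where $\bar w_j$ could survive and a candidate class; proving that the candidate in degree $(2^t-1)(k-\alpha(k))+e(\alpha(k)-\epsilon(k))$ is actually nonzero in $H^*(C_k(\R^d);\Z/2)$ requires the full structure of $H^*_{S_k}(F_k(\R^d);\Z/2)$ as a module with its Steenrod action, and this is precisely the computation where the literature contained an error (in Hung's work) for decades and which the cited monograph exists to repair. A Fox--Neuwirth--Fuks cell structure makes the cochain complex explicit but does not by itself identify the top nonvanishing dual class; the passage from ``computable in principle'' to the closed-form degree formula is the lengthy technical argument you have skipped. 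As written, your proposal establishes only the framework $n(d,k)\geq k+J$ and the heuristic value of $J$, not the theorem.
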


The proof of this theorem relies on a lengthy technical argument which corrects a proof of a theorem of Hung; the details surrounding the error, as well as a history of relevant configuration space computations, are well-chronicled in the introduction of \cite{blagojevic2020}.

We are now equipped to discuss the relationship between regular maps and Theorem~\ref{thm:main}.  It is convenient to introduce some intermediate language: a continuous map $f \colon \R^d \to \R^n$ is called \emph{affinely} $(k-1)$-\emph{regular} if for every $k$ distinct points of $\R^d$, the images do not lie in any affine $(k-2)$-plane.   The relationship with $k$-regularity is simple:

\begin{lemma}
\label{lem:aff-reg}
    There is a $k$-regular map $\R^d \to \R^n$ if and only if there is an affinely $(k-1)$-regular map $\R^d \to \R^{n-1}$.
\end{lemma}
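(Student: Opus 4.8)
The plan is to reduce everything to the standard homogenization dictionary: for points $p_1,\dots,p_m\in\R^{N}$, affine independence is equivalent to linear independence of the lifted vectors $(p_1,1),\dots,(p_m,1)\in\R^{N+1}$, since a vanishing affine combination $\sum\lambda_i p_i=0$ with $\sum\lambda_i=0$ is precisely the datum of a vanishing linear combination of the $(p_i,1)$, and conversely. Taking $N=n-1$ and $m=k$, this identifies ``$k$ affinely independent points in $\R^{n-1}$'' with ``$k$ linearly independent vectors in $\R^n$ lying in the affine hyperplane $\{y_n=1\}$,'' which is the bridge between the two regularity notions.

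For the ``if'' direction — the one needed to transfer nonexistence results — I would argue directly. Given an affinely $(k-1)$-regular map $g\colon\R^d\to\R^{n-1}$, set $\hat g(x)=(g(x),1)\in\R^n$. For any $k$ distinct points $x_1,\dots,x_k$, affine independence of $g(x_1),\dots,g(x_k)$ is, by the dictionary above, exactly linear independence of $\hat g(x_1),\dots,\hat g(x_k)$, so $\hat g$ is $k$-regular. This half is short and complete.

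The ``only if'' direction is the substantive one. Given a $k$-regular map $f\colon\R^d\to\R^n$, I first note that $k$ linearly independent vectors are in particular affinely independent with $0$ outside their affine hull, so $f$ is already affinely $(k-1)$-regular \emph{into} $\R^n$; the real task is to shed one target dimension. The natural construction is radial: if there is a linear functional $a$ with $\langle a,f(x)\rangle\neq0$ for all $x$ — whence $\langle a,f\rangle$ has constant sign by connectedness of $\R^d$, say positive — then $g(x)=f(x)/\langle a,f(x)\rangle$ lands in the affine hyperplane $\{\langle a,y\rangle=1\}\cong\R^{n-1}$, and since the $g(x_i)$ are positive rescalings of the $f(x_i)$ they inherit linear, hence affine, independence within that hyperplane. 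This is exactly gnomonic projection of the spherical image $f/\|f\|$ from the pole $a$, and it preserves the affine independence of all configurations.

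I expect the existence of such a functional $a$ — equivalently, that $f$ has image in an open half-space, i.e. that $f/\|f\|$ lands in an open hemisphere of $S^{n-1}$ — to be the main obstacle, and it is genuinely delicate rather than routine. A dimension count shows that the directions ruled out by the various $(k-1)$-dimensional configuration spans can fill all of $\R P^{n-1}$, and for $k\ge 3$ the image of a $k$-regular map may positively span $\R^n$ (take $n+1$ vertices of a simplex containing the origin, any $k\le n$ of which remain linearly independent), so no single linear functional need work. Consequently the converse really amounts to producing a $k$-regular map whose image lies in an affine hyperplane — a \emph{dehomogenized} model — rather than merely projecting the given map, which is the one point of the argument requiring care; since the applications use only the ``if'' direction, I would isolate and treat this step separately.
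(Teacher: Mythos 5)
Your ``if'' direction (homogenization: $g\mapsto(g,1)$) is correct and is exactly the paper's argument for that half. The genuine gap is in the ``only if'' direction, which you correctly diagnose as the delicate half --- the image of a $k$-regular map need not lie in an open half-space, so no single linear functional $a$ need work globally --- but then leave unresolved. The missing idea is a domain restriction: the lemma is a pure existence statement, so you are free to replace $f$ by its restriction to any open ball $U\subset\R^d$, since $U$ is homeomorphic to $\R^d$ and $f|_U$ is still $k$-regular. Pick any $x_0$; $k$-regularity forces $f(x_0)\neq 0$, so by continuity a small enough ball $U\ni x_0$ has $f(U)$ contained in the open half-space $\{y:\langle f(x_0),y\rangle>0\}$. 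Now your own gnomonic projection onto the affine hyperplane $\{\langle f(x_0),y\rangle=1\}$ applies verbatim and produces the affinely $(k-1)$-regular map into $\R^{n-1}$. This is precisely what the paper means by ``we may arrange (by restricting the domain if necessary) that $f$ misses some affine hyperplane.''

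A second, related error: you assert that the applications use only the ``if'' direction and that the hard half may therefore be set aside. It is the opposite. The corollary $n(d,4)\geq 2d+2^{\gamma(d)}+1$ is obtained by assuming a $4$-regular map $\R^d\to\R^m$ exists and producing from it an affinely $3$-regular map $\R^d\to\R^{m-1}$, contradicting Theorem~\ref{thm:main2}; that is exactly the ``only if'' direction. So the step you propose to isolate and defer is the one the paper actually needs, and the proof is incomplete without the restriction argument above.
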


\begin{proof}
	If $f\colon \R^d \to \R^{n-1}$ is affinely $(k-1)$-regular, then $\R^d \to \R^n, x \mapsto (f(x),1)$ is $k$-regular. Conversely, given a $k$-regular map $f\colon \R^d \to \R^n$, we may arrange (by restricting the domain if necessary) that $f$ misses some affine hyperplane $H$.   We may assume that $H$ is given by~${x_n =0}$. Identify $\R^{n-1}$ with the affine hyperplane~${x_n=1}$, and define $g\colon \R^d \to \R^{n-1}$ by letting $g(x)$ be the intersection of the line spanned by $f(x)$ with the hyperplane~${x_n=1}$. Then $g$ is affinely $(k-1)$-regular.
\end{proof}

Therefore Theorem~\ref{thm:main2}, which prohibits affinely $3$-regular maps $\R^d \to \R^n$ when $n \leq 2d + 2^{\gamma(d)} - 1$, can be viewed as an obstruction to the existence of $4$-regular maps.

\begin{corollary} $n(d,4) \geq 2d + 2^{\gamma(d)} + 1$.
\end{corollary}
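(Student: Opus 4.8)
The plan is to derive this lower bound directly from Theorem~\ref{thm:main2} together with Lemma~\ref{lem:aff-reg}, using the dictionary between $k$-regularity and affine $(k-1)$-regularity in the case $k = 4$. First I would recast Theorem~\ref{thm:main2} as an obstruction to affine $3$-regularity: for $n \le 2d + 2^{\gamma(d)} - 1$, no embedding $\R^d \to \R^n$ can be affinely $3$-regular. Granting this reformulation, Lemma~\ref{lem:aff-reg} applied with $k = 4$ yields the stated bound on $n(d,4)$ essentially immediately.

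Next I would justify the reformulation in two elementary steps. An affinely $3$-regular map $f$ is in particular injective: if $f(x) = f(y)$ for distinct $x,y$, then adjoining any two further distinct points $z,w$ produces four distinct points whose images are not affinely independent (two of them coincide), contradicting affine $3$-regularity. Hence every affinely $3$-regular map is an embedding, so Theorem~\ref{thm:main2} applies to it. It then remains to check that an embedding which inscribes a trapezoid or maps three distinct points to a line cannot be affinely $3$-regular. In the trapezoid case, the four vertices are four distinct points of $\R^d$ whose images lie in a common affine $2$-plane, which directly violates affine $3$-regularity. In the collinear case, three distinct points map to a common line; adjoining any fourth point (which exists since $\R^d$ is infinite) gives four distinct points whose images lie in the affine span of that line together with one additional point, again an affine $2$-plane. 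Either way affine $3$-regularity fails, so Theorem~\ref{thm:main2} shows that any affinely $3$-regular map $\R^d \to \R^m$ forces $m \ge 2d + 2^{\gamma(d)}$.

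Finally I would combine these observations with Lemma~\ref{lem:aff-reg}. If $\R^d$ admits a $4$-regular map to $\R^{n(d,4)}$, then by Lemma~\ref{lem:aff-reg} it admits an affinely $3$-regular map to $\R^{n(d,4)-1}$, whence $n(d,4) - 1 \ge 2d + 2^{\gamma(d)}$ and therefore $n(d,4) \ge 2d + 2^{\gamma(d)} + 1$. The only point requiring any care is the geometric reduction in the second paragraph, namely confirming that both degeneracy types produced by Theorem~\ref{thm:main2} genuinely place four distinct points into an affine $2$-plane; this is routine, and I do not expect a serious obstacle, since all of the substantive topological content has already been carried out in the proof of Theorem~\ref{thm:main2}.
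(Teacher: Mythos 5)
Your proposal is correct and follows exactly the route the paper intends: the corollary is stated as an immediate consequence of Theorem~\ref{thm:main2} (read as prohibiting affinely $3$-regular maps into $\R^n$ for $n \leq 2d + 2^{\gamma(d)}-1$) combined with Lemma~\ref{lem:aff-reg} for $k=4$, and the paper gives no further proof. The routine verifications you supply (injectivity of affinely $3$-regular maps, and that either degeneracy type places four distinct points in an affine $2$-plane) are exactly the details the paper leaves implicit.
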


We compare this bound to those described above.   First, note that the bound is never worse than that obtained by using the test function $\varphi$.   Chisholm's bound, which applies only when $d$ is a power of $2$, gives $n(d,4) \geq 3d + 1$, which is better than our bound for $d \geq 32$.  Similarly, the bound of Theorem~\ref{thm:bla} is frequently better than our bound, although there are infinitely many values of $d$ for which the bounds agree.  For example,  when $d = 2^\ell - 2 = 2(2^{\ell-1}-1)$, $\rho(d) = 2 = 2^{\gamma(d)}$,  our bound yields $n(2^\ell - 2,4) \geq 2^{\ell+1} - 1$, which matches the bound obtained by Theorem~\ref{thm:bla}.  In this sense Theorem~\ref{thm:main2},  which not only prohibits $4$-regular embeddings, but imposes further geometric constraints on $4$ points, strengthens the results of Theorem~\ref{thm:bla} for infinitely many dimensions $d$.

\section{The development and geometry of the test function}
\label{sec:testmap}

Here we explain the steps leading to the construction of the test function $\Phi$, and we describe the sense in which $\Phi$ captures the geometry more adequately than the test function $\varphi$ defined in Equation (\ref{eqn:phi}).

Consider two disjoint disks $D_1$, $D_2$ in $\R^d$.   An affinely $3$-regular map $f$ induces an embedding of the space $A$ of non-zero affine combinations of $D_1$ and $D_2$ into $\R^n$, which gives a lower bound for $n$ for dimension reasons; this is the geometric idea captured by the test function $\varphi$. 

Now observe that we can improve this by moving the disks around in a special way to generate whole families of embeddings.  In particular, using a nonsingular bilinear map, we can find an $S^{\rho-1}$-family of maps of $X$ into $\R^n$:
\[
\Psi_z \colon X \to \R^n \colon (t,x,y) \mapsto  t[f(x+y+B(z,x-y))-f(x+y-B(z,x-y))],  \ \ z \in S^{\rho-1}
\]
Note that $\Psi_{-z} = -\Psi_{z}$, and $\Psi_z$ fails to be an embedding if and only if $\Phi(\cdot,z)$ hits zero.  Thus the proof of Theorem~\ref{thm:main} relies not on obstructing a single embedding, but obstructing the existence of $\Z/2$-equivariant families of embeddings; this is the geometric idea captured by the test function $\Phi$.  Similarly, the technical modification of $\Phi$ used in the proof of Theorem~\ref{thm:main2} relies on obstructing $(\Z/2)^2$-equivariant families of embeddings.

The idea to try to obstruct equivariant families of embeddings was inspired by the authors' recent development and study of the $\Z/2$-\emph{coindex of spaces of embeddings}; see~\cite{FrickHarrison}.   It is our belief that we have only scratched the surface of results of this form, and it would be interesting to see whether similar techniques could yield strong bounds for other values of $d$ and $k$.  We emphasize that the tools used in our proofs of Theorems~\ref{thm:main} and~\ref{thm:main2} predated much of the study of $k$-regular embeddings, especially the recent results which rely on technical advances in equivariant topology.  We hope that our simple argument demonstrates that strong obstructions can be computed for $k$-regular embeddings and other nondegenerate functions without the use of sophisticated algebraic techniques; it is only important that the test functions adequately capture the geometry of the situation.   

We conclude by noting that Theorems~\ref{thm:main} and~\ref{thm:main2} also connect to the studies of inscribed shapes in various objects, perhaps most notably the square/rectangle peg problem in $\R^2$ (see \cite{GreeneLobb} or \cite{Frick} for description and history).  According to Greene and Lobb in \cite{GreeneLobb},  there is ``a long line of attack on these problems which involves identifying the inscribed feature with the (self-)intersection of an associated geometric-topological object. The arguments tend to be quite short, once the appropriate outlook and auxiliary result is identified.''  Our short proof of Theorem~\ref{thm:main2}, which capitalizes on the fact that a self-intersection occurs in one of a $(\Z/2)^2$-equivariant ($S^{2^{\gamma(d)} - \rho(d)} \times S^{\rho(d)-1}$)-parameter family of maps $X \to \R^n$ when $n \leq 2d + 2^{\gamma(d)} - 1$, provides one more example of such an attack.

\bibliographystyle{plain}

\end{document}